\tikzstyle{block}=[draw opacity=0.7,line width=1.4cm]
\newtheorem{theorem}{Theorem}
\newtheorem{lemma}{Lemma} 
\newtheorem{remark}{Remark} 
\newtheorem{assum}{Assumption} 
\begin{document}
\title{Convergence rates of nonlinear inverse problems in Banach spaces: conditional stability and weaker norms}
\maketitle
\begin{center}
\center{ $\text{Gaurav Mittal}$ and    ${\text{Ankik Kumar Giri}}$}
\medskip
{\footnotesize
 \center{ Department of Mathematics, Indian Institute of Technology Roorkee, Roorkee, India. }
}\\
\email {gaurav.mittaltwins@gmail.com}
\medskip
\begin{abstract} In this short note, we  formulate the convergence rates of the well known Tikhonov regularization scheme for solving the nonlinear ill-posed problems in Banach spaces. For deriving the convergence rates, we employ the novel smoothness concept of conditional stability estimates in terms of weaker norms. Moreover, we show that our results are applicable on two ill-posed  inverse problems.
\medskip

\hspace{-5mm}\subjclass {}{\textbf{AMS Subject Classifications}: $47$J$06$, $65$J$20$, $47$J$20$, $47$A$52$}\\
\hspace{-5mm}\keywords{\textbf{Keywords:}  Nonlinear ill-posed problems, Inverse Problems, Convergence rates, {Stability estimates}}
 
\end{abstract}
\end{center}

\section{Introduction and Preliminaries}

\subsection{Introduction}
\noindent Let $F: D(F) \subset U \to V$ be a nonlinear   operator between the Banach spaces $U$ and $V$, where $D(F)$ is the domain of $F$. Let $\|\cdot\|$ denote the norm of $U$ and $V$. 
 We study  the  nonlinear ill-posed problems governed  by the operator equation  \begin{equation}\label{1}
F(u) = v, \quad u\in D(F), v \in V.
\end{equation}  Since, in practice, the exact data is not always available, we look for the approximate solutions of (\ref{1}). In general,   variational regularization (in particular,  Tikhonov regularization) \cite{Engl}, regularization with sparsity constraints \cite{Scherzer},   regularization via iterative methods \cite{Kalten, Schuster} etc.,  are used for finding the stable approximations of the exact solution of (\ref{1}). When $U$ and $V$ are Hilbert spaces, a widely used regularization method is  Tikhonov regularization,  which involves the minimization of the  functional $$\|F(u)-v'\|^2+\alpha\|u-u_0\|^2, \ u\in D(F),\ \alpha > 0,$$  where $v'$ is some noisy approximation of $v$ and $u_0\in U$ is an initial guess of the exact solution of (\ref{1}).  Because of its tendency to smooth the solutions, Tikhonov regularization does not yield satisfactory output in Hilbert spaces,  especially if there are jumps or sparsity in the structure of exact solution. So, in the recent years other types of Tikhonov regularization have been explored in  Banach spaces, see \cite{Engl, Scherzer, Schuster}.  More specifically, the following generalized Tikhonov regularization is widely considered in the Banach spaces \begin{equation}\label{2}
T_{\alpha}(u, \delta) := \|F(u)-v^{\delta}\|^p + \alpha R(u), \ u\in D(F),
\end{equation} with $1 \leq p < \infty$, $R: U \to [0, \infty]$ is a  stabilizing functional which is convex and proper, i.e.   $D(R)\neq \emptyset$,  where $D(R)=\{u\in U:R(u)<\infty\}$ is the domain of $R$,  $\alpha$ is the regularization parameter and $v^{\delta}$ is some noisy approximation of $v\in V$ satisfying \begin{equation}\label{3}
\|v^{\delta}-v\| \leq \delta.
\end{equation} 
Typically, for the minimizers of Tikhonov regularization, the following four issues, i.e.,  existence, stability,  convergence  to the exact solution and  convergence rates  are of the most interest \cite{Scherzer}.   For the Tikhonov minimizers of (\ref{2}),  these four issues are well addressed in \cite{Scherzer}. 
  The determination of     convergence rates have a very long tradition in inverse problems (see, e.g.,  \cite{Baku1, Baku2, Engl, Kabanikhin, Kalten, Neubauer, Werner}). In order to derive the convergence rates, some kind of smoothness of the exact solution is utilized in terms of the source conditions or variational inequalities \cite{Engl, Scherzer, Schuster}. One of the recently developed smoothness concepts is to study the convergence rates in Hilbert as well as Banach spaces is that of conditional stability estimates \cite{Mittal1, Mittal2, Mittal3, Schuster, Werner}.
 In this paper, we are peculiarly interested in the convergence rates at which minimizers of (\ref{2}) with $R(u)=\|u-u_0\|^2$ for an initial guess $u_0$ of the exact solution, converges to the exact solution of (\ref{1}). The highlight of our work is that we employ the following  novel smoothness concept of conditional stability in weaker norms to derive the convergence rates:\begin{equation}\label{4}
 \|u-u^{\dagger}\|_{\eta}\leq C\ \eth(\|F(u)-F(u^{\dagger})\|),\ C>0,  
 \end{equation} where $\|\cdot\|_{\eta}$ is a norm weaker than the strong norm $\|\cdot\|$ (because it is induced from a topology weaker than the norm topology), $\eth$ is an index function, i.e.  $\eth:[0,\infty)  \to [0, \infty)$ is such that  $(i)$ $\eth(0)=0$, $(ii)$ $\eth$ is strictly monotonically increasing and continuous;  and $u^{\dagger}$ is a $R$-minimizing  solution of (\ref{1}).  
   We recall that an element $u^{\dagger} \in D$, where $D:= D(F)\cap D(R)$, is called a $R$-minimizing solution of (\ref{1})  if it satisfies (\ref{1}) and  minimizes the functional $R$, i.e., $R(u^{\dagger})= \min\{R(u):  u \in D\ \text{such that}\ F(u) = v \}.$ We remark that if the norm $\|\cdot\|_{\eta}$ is replaced with the usual norm $\|\cdot\|$ in (\ref{4}), then it becomes the usual conditional stability estimate \cite{Schuster}. 
\subsection{Preliminaries and assumptions} 
  In this section, we revisit some well known definitions and results required in this paper. Moreover, we also discuss the various assumptions that are needed to prove our main result.\newline
\textbf{Bregman distance}:
Let $R: U \to \mathbb{R}^+ \cup \{0, \infty\}$  be a   convex and proper functional having subdifferential $\partial R$ and  $u\in U$ be such that $\zeta \in \partial R(u) \subset U^*$. Here $U^*$ is the dual space of $U$.  Then,  with respect to $R$, the Bregman distance $D_{\zeta}( u^*, u)$ for any element $u^* \in U$ from $u$ is given by    \begin{equation} \label{5}
 D_{\zeta}( u^*, u) := R(u^*) - R(u) - \langle \zeta, u^*-u \rangle_{U^*, U}, \end{equation} where $\langle \zeta, u^*-u \rangle_{U^*, U}$ denotes the dual pairing between $U^*$ and $U$. By definition of Bregman distance, it is clear that $D_{\zeta}(\cdot, u)$ is defined at $u$ only if  $\partial R(u) \neq \emptyset$. \newline For variational regularization methods, it is more appropriate to show the convergence of minimizers of (\ref{2})  in  Banach spaces  via Bregman distances instead of Ljapunov functional or conventional norm  based functionals \cite{Osher}.
\begin{remark}
If we consider $R(u) = \frac{1}{2}\|u\|^2$, then $(\ref{5})$ gives  $D_{\zeta}( u^*, u) = \frac{1}{2}\|u^*-u\|^2$. Further, one can observe that   $D_{\zeta}(\cdot, \cdot)$ is similar to a metric but, in general, it does not satisfy the symmetric property and the triangular inequality.
\end{remark}
\noindent To assure the well-posedness, convergence (to the exact solution), and stability of the   Tikhonov regularized solutions of (\ref{2}),  the following assumptions are required.
\begin{assum}
$(1)$ The Banach spaces $U$ and $V$ are associated with the topologies $O_U$ and $O_V$, respectively,  and these topologies are coarser than the norm topology.\newline 
$(2)$ The norm $\|\cdot\|_V$ is sequentially lower semi-continuous with respect to  $O_V$. \newline
$(3)$ The functional $R: U \to [0, \infty]$ is  sequentially lower semi-continuous with respect to $O_U$  as well as convex.\newline
$(4)$ $D= D(F) \cap D(R) \neq \emptyset$.\newline
$(5)$ The level sets $M_{\alpha}(K)$, for every $\alpha > 0$ and $K > 0$, are  sequentially pre-compact and closed with respect to $O_{U}$, where  $M_{\alpha}(K):= \{u \in D : T_{\alpha}(u, 0) \leq K\}.$\newline
$(6)$ The restriction of $F$ on $ M_{\alpha}(K)$ is  sequentially continuous with respect to $O_U$ and $O_V$, for every $\alpha > 0$ and $K>0$. 
\end{assum}\noindent
The next lemma is a well-known result on the existence, stability and convergence of  Tikhonov minimizers of (\ref{2}), provided  Assumption $1$ is satisfied. 
\begin{lemma}\cite[Chapter 3]{Scherzer}
Let Assumption $1$ hold. Then:\newline
$(1)$ the minimizer of the  Tikhonov functional $(\ref{2})$ exists for any $\alpha > 0$ and $v^{\delta} \in V$. \newline
$(2)$ the minimizers of $(\ref{2})$ are stable in the sense that if $\{v^k\}$ is a sequence converges to $v^{\delta} \in V$, with respect to the norm topology,  then the each corresponding sequence $\{u^k\}$ of  minimizers of $(\ref{2})$, with $v^{\delta}$ replaced by $v^k$, has a convergent subsequence $\{u^{k'}\}$ with respect to $O_U$. The limit of the subsequence $\{u^{k'}\}$ is a minimizer of $(\ref{2})$.\newline 
$(3)$ if there exists  a solution of $(\ref{1})$  in $D$ and $\alpha  :(0, \infty) \to (0, \infty)$ fulfills $
\alpha(\delta) \to 0$  and   $(\alpha(\delta))^{-1}\delta^p \to 0$  as $\delta \to 0$, then the minimizers of the  Tikhonov functional $(\ref{2})$ converges  to the solution of $(\ref{1})$.
\end{lemma}
\section{Convergence rates in terms of weaker norms and examples}
In this section, we formulate the convergence rates for Tikhonov regularization scheme (\ref{2}) by incorporating the stability estimates (\ref{4}) in the form of Theorem 1.\subsection{Main result} 
 \begin{theorem} Let $F:D(F)\subset U \to V$ be a nonlinear operator between the Banach spaces $U$ and $V$. Moreover, let the following assumptions hold:
\newline
$(1)$ $u^{\dagger}$ is a $R$-minimizing solution of $(\ref{1})$. \newline
$(2)$ Assumption $1$ holds with $R(u) = \|u-u_0\|^p$. \newline
$(3)$ $u_0$ is in some neighborhood of $u^{\dagger}$, i.e., there exists some $K_1 > 0$ such that \begin{equation}\label{6}
 \|u_0-u^{\dagger}\| \leq K_1. \end{equation}
$(4)$  {$F:   D(F)\subset U \to V$ satisfies the following conditional stability estimate: 
 \begin{equation}\label{7}
 \|u_1-u_2\|_{\eta} \leq C\ \eth(\|F(u_1)-F(u_2)\|),
 \end{equation} for an index function $\eth$}, provided $\|u_1- u_2\| \leq  M$,   where $M$ and $C$ are positive  constants and $\|\cdot\|_{\eta}$ is an appropriate norm which induces a topology coarser than  the  norm topology on $U$. \newline
Further, let $\alpha = \alpha(\delta)$ be such that $0<\alpha(\delta) \leq \alpha_{\max}$,  $\frac{\delta^p}{\alpha} \leq c$ for  $c >0$, $M$ and $K_1$ satisfy \begin{equation}\label{8}
(c+K_1^p)^{\frac{1}{p}} +K_1 \leq M, \ \ K_1^p\leq \frac{K_2}{\alpha_{\max}}
\end{equation} for some $K_2>0$. Let $u_{\alpha}^{\delta}$ be the minimizer of   $(\ref{2})$. Then, for $\alpha(\delta)\sim \delta^{p- \epsilon}$,    we have {\begin{equation} \label{9}
 \|u_{\alpha}^{\delta}- u^{\dagger}\|_{\eta} = O\big(\eth\big(\delta^{\frac{p-\epsilon}{p}}\big)\big), \ \ \text{for}\ \delta \to 0, \ \text{where}\   0 < \epsilon < p. 
  \end{equation}} \end{theorem}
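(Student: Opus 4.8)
The plan is to mirror the structure of the proof of Theorem~3.1, the only essential difference being that here the stabilizing functional is $R(u)=\|u-u_0\|^p$ and the conditional stability estimate~(3.19) is phrased directly in terms of the weaker norm $\|\cdot\|_\eta$ with a constraint on the \emph{strong} distance $\|u_1-u_2\|\le M$ rather than on membership in a level set. So the core of the argument is to show that both $u_\alpha^\delta$ and $u^\dagger$ lie within strong distance $M$ of one another, and then simply apply~(3.19) with $u_1=u_\alpha^\delta$, $u_2=u^\dagger$, after bounding the residual $\|F(u_\alpha^\delta)-v^\delta\|$.

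The first step is the standard minimizing comparison: since $u_\alpha^\delta$ minimizes $T^1_\alpha(\cdot,\delta)$, we have $\|F(u_\alpha^\delta)-v^\delta\|^p+\alpha\|u_\alpha^\delta-u_0\|^p \le \|F(u^\dagger)-v^\delta\|^p+\alpha\|u^\dagger-u_0\|^p \le \delta^p+\alpha K_1^p$, using~(1.3) and~(3.18). Dropping the nonnegative residual term and dividing by $\alpha$ gives $\|u_\alpha^\delta-u_0\|^p \le \delta^p/\alpha+K_1^p \le c+K_1^p$, using $\delta^p/\alpha\le c$. Hence $\|u_\alpha^\delta-u_0\|\le (c+K_1^p)^{1/p}$, and by the triangle inequality together with~(3.18), $\|u_\alpha^\delta-u^\dagger\| \le (c+K_1^p)^{1/p}+K_1 \le M$ by the hypothesis on $M$. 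This is precisely the condition needed to invoke~(3.19).

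The second step bounds the discrepancy. From the same minimizing inequality, dropping the nonnegative penalty term, $\|F(u_\alpha^\delta)-v^\delta\|^p \le \delta^p+\alpha K_1^p = \alpha\big(\delta^p/\alpha+K_1^p\big) \le \alpha\big(c+K_1^p\big)$, so $\|F(u_\alpha^\delta)-v^\delta\| \le \alpha^{1/p}(c+K_1^p)^{1/p}$. Then, since $\eth$ is monotonically increasing and $\|v^\delta-v\|\le\delta$,
\begin{equation*}
\|u_\alpha^\delta-u^\dagger\|_\eta \le C\,\eth(\|F(u_\alpha^\delta)-F(u^\dagger)\|) \le C\,\eth\big(\|F(u_\alpha^\delta)-v^\delta\|+\delta\big) \le C\,\eth\big(\alpha^{1/p}(c+K_1^p)^{1/p}+\delta\big).
\end{equation*}
Finally, plugging in the parameter choice $\alpha(\delta)\sim\delta^{p-\epsilon}$, we get $\alpha^{1/p}\sim\delta^{(p-\epsilon)/p}$, which dominates $\delta$ as $\delta\to0$ since $(p-\epsilon)/p<1$; absorbing constants into the index function (or using a bound like $\eth(2x)$ and monotonicity) yields $\|u_\alpha^\delta-u^\dagger\|_\eta = O\big(\eth(\delta^{(p-\epsilon)/p})\big)$, which is~(3.21).

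I do not anticipate a serious obstacle: the argument is a streamlined version of Theorem~3.1 in which the level-set bookkeeping of~(3.5)--(3.7) collapses into the single elementary estimate $\|u_\alpha^\delta-u^\dagger\|\le M$. The one point deserving a little care is the verification that the hypotheses of Lemma~2.2 (existence, stability, convergence of minimizers) are genuinely in force with $R(u)=\|u-u_0\|^p$ — this is covered by assumption~(2) of the theorem, which imposes Assumption~2.1 for exactly this $R$ — and the use of Remark~2.1 / Remark~2.3 to justify that the parameter choice $\alpha\sim\delta^{p-\epsilon}$ is admissible (i.e. still forces convergence), which is why $\epsilon$ appears. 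The rest is monotonicity of $\eth$ and elementary inequalities.
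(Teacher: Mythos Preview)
Your proposal is correct and follows essentially the same approach as the paper's own proof: the minimizing comparison $T_\alpha^1(u_\alpha^\delta,\delta)\le T_\alpha^1(u^\dagger,\delta)\le\delta^p+\alpha K_1^p$ yields both $\|u_\alpha^\delta-u_0\|\le(c+K_1^p)^{1/p}$ (hence $\|u_\alpha^\delta-u^\dagger\|\le M$ via the triangle inequality and the hypothesis on $M$) and the residual bound $\|F(u_\alpha^\delta)-v^\delta\|\le\alpha^{1/p}(c+K_1^p)^{1/p}$, after which~(3.19), monotonicity of $\eth$, and the choice $\alpha\sim\delta^{p-\epsilon}$ give the rate. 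The paper's argument is identical step for step.
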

 \begin{proof}
From the definition of $u_{\alpha}^{\delta}$, (\ref{3}) and (\ref{6}),  we have that  \begin{equation}\label{9e}
 \|F(u_{\alpha}^{\delta})-v^{\delta}\|^p + \alpha\|u_{\alpha}^{\delta}-u_0\|^p \leq \|F(u^{\dagger})-v^{\delta}\|^p + \alpha\|u^{\dagger}-u_0\|^p
 \leq \delta^p + \alpha K_1^p.
  \end{equation}
  Consequently, the non-negativity of the norm gives  
   \begin{equation}\label{10}
  \|u_{\alpha}^{\delta}-u_0\|^p \leq \frac{\delta^p}{\alpha}+K_1^p \leq c+K_1^p.  \end{equation} 
  By using (\ref{6}), (\ref{8}) and (\ref{10}) along with the triangular inequality, we derive that \begin{equation*}
 \|u_{\alpha}^{\delta}-u^{\dagger}\| \leq \|u_{\alpha}^{\delta}-u_0\|+ \|u_0-u^{\dagger}\| \leq M.
\end{equation*}   
    The last inequality implies that (\ref{7}) is applicable if we take $u_1=u_{\alpha}^{\delta}$ and $u_2=u^{\dagger}$.  {Hence,   (\ref{7}) and (\ref{3})    along with the definition of index function yield \begin{equation}\label{11}
    \|u_{\alpha}^{\delta}-u^{\dagger}\|_{\eta} \leq C\ \eth(\|F(u_{\alpha}^{\delta})-F(u^{\dagger})\|) \leq C \ \eth(\big(\|F(u_{\alpha}^{\delta})-v^{\delta}\|+\delta\big).
    \end{equation}}  To this end, we deduce from (\ref{9e}) and (\ref{8}) that \begin{equation*}
\|F(u_{\alpha}^{\delta})-v^{\delta}\|^p    \leq \|F(u^{\dagger})-v^{\delta}\|^p + \alpha K_1^p
\leq  \alpha \bigg(  
\frac{\delta^p}{\alpha} +   \frac{K_2}{\alpha_{\max}}\bigg).
\end{equation*} 
    Plugging this estimate in (\ref{11}) to obtain  {\begin{equation*}
\|u_{\alpha}^{\delta}-u^{\dagger}\|_{\eta}  \leq C\  \eth\bigg(\alpha^{\frac{1}{p}}\bigg(  
\frac{\delta^p}{\alpha} +   \frac{K_2}{\alpha_{\max}}\bigg)^{\frac{1}{p}} + \delta\bigg). 
\end{equation*} Further, as $\frac{\delta^p}{\alpha} \leq c$,  we get \begin{equation*}
\|u_{\alpha}^{\delta}-u^{\dagger}\|_{\eta}   \leq C\  \eth\bigg(\alpha^{\frac{1}{p}}\bigg(  
c +   \frac{K_2}{\alpha_{\max}}\bigg)^{\frac{1}{p}} + \delta\bigg). \end{equation*}
Finally,  for the a-priori choice of $\alpha = \alpha(\delta) \sim \delta^{p-\epsilon}$ with $0 < \epsilon < p$, we get {\begin{equation*} 
 \|u_{\alpha}^{\delta}- u^{\dagger}\|_{\eta} = O\big(\eth\big(\delta^{\frac{p-\epsilon}{p}}\big)\big) \ \ \text{for}\ \delta \to 0, 
  \end{equation*}}} where   $0 < \epsilon < p$. This completes the proof. \end{proof}\noindent
\begin{remark} It is worth to mention that  the regularization is only used in Theorem $1$ to constrain the  minimizers of  $(\ref{2})$ to a set on which the conditional stability estimate $(\ref{7})$ hold.\end{remark}\subsection{Examples}\noindent
 Next, we discuss two examples of ill-posed inverse problem on which the result of Theorem $1$ can be applied. In the first example, we consider the inverse problem which involves the   determination of a potential function from the knowledge of Neumann to Dirichlet map for the wave equation (cf. \cite{Bao2}). In the second example, we discuss an ill-posed classical inverse scattering problem.
\newline
\textbf{Example 1} (\textbf{Inverse problem related with a wave equation}). 
We consider the wave equation \begin{equation}\label{13}\begin{cases}\begin{split}
u_{tt}-\Delta u+qu = 0 \qquad \text{for all} \ (x, t) \ \in \Omega \times (0, T),
\\ u = u_t = 0  \qquad\qquad\text{for all} \ x  \in \Omega \ \text{and} \  t = 0,\hspace{4mm}  \\ \frac{\partial u}{\partial \nu} = f \qquad\qquad \text{for all} \ (x, t) \ \in \partial \Omega \times (0, T),\end{split}\end{cases}
\end{equation}where $\Omega$ is a  bounded open set in $\mathbb{R}^n$ having a smooth boundary $\partial \Omega$  for $n \geq 2$, $T > \ $diameter$(\Omega)$, $q=q(x)$ is the potential function and $\nu$ is the unit outward normal.   The associated Neumann to Dirichlet map $\Lambda_q$ with $(\ref{13})$  is defined as follows: \begin{equation*}
\Lambda_q : f \in L^2(\partial \Omega \times (0, T))   \to u|_{\partial \Omega \times (0, T)} \in H^1(\partial \Omega \times (0, T)).
\end{equation*}
Let us consider the inverse problem  of determining  the potential function $q = q(x)$ from $\Lambda_q$. 
Bao and Yun \cite{Bao2}, established the following  conditional stability estimate  for  the  above-mentioned inverse problem. \newline
\textbf{Conditional stability estimate}:
Suppose that $\|q_1-q_2\|_{H^{\beta}(\mathbb{R}^n)} \leq M^{\beta}$ for some $\beta >\frac{n}{2}+1$, $M>0$ and $\|q_i\|_{L^{\infty}(\mathbb{R}^n)} \leq M$, $i=1, 2$. Then, we have \begin{equation}\label{14}
  \|q_1-q_2\|_{L^{\infty}(\Omega)}\leq C \|\Lambda_{q_1}-\Lambda_{q_2}\|_*^{1-\epsilon},
\end{equation}
where $0<\epsilon <1$ and $C>0$. Here $\|\cdot\|_*$ denotes the operator norm from $L^2(\partial \Omega \times (0, T))$ to $H^1(\partial \Omega \times (0, T))$ and $H^{\beta}(\mathbb{R}^n)$ is the  
Sobolev space of order $\beta$.\newline \noindent  For our convenience, we write the  inverse problem under consideration as\begin{equation}\label{15}
 F: q \in L^{\infty}(\Omega) \mapsto \Lambda_q \in B\big(L^2 (\partial \Omega \times (0, T)), H^1(\partial \Omega \times (0, T))\big),  \end{equation}  where $B\big(L^2 (\partial \Omega \times (0, T)), H^1(\partial \Omega \times (0, T))\big)$ is the  Banach space of all bounded linear maps from $L^2 (\partial \Omega \times (0, T))$ to  $H^1(\partial \Omega \times (0, T))$. 
 Let us assume that there exists a $R$-minimizing  solution  $q^{\dagger}$ of (\ref{15}) corresponding to the wave equation (\ref{13}), i.e., $F(q^{\dagger}) = \Lambda_q$.  Also let the initial approximation $q_0$ of the exact solution be such that  $\|q^{\dagger}-q_0\|_{H^{\beta}} \leq K_1$ for some $K_1 > 0$.  {It can be easily seen   that assumption $4$ of Theorem $1$ is satisfied via the conditional stability estimate (\ref{14})  with the index function $\eth(s)=s^{1-\epsilon}$, $0<\epsilon<1$}. Therefore,  Theorem $1$ provides the convergence rates 
 $$\|q_{\alpha}^{\delta}- q^{\dagger}\|_{L^{\infty}(\Omega)}  = O\bigg(\delta^{\frac{(1-\epsilon)(p-\epsilon_1)}{p}}\bigg),$$ provided the regularization parameter is such that $\alpha(\delta)\sim \delta^{p- \epsilon_1}$, where $\epsilon_1<p$, $q_{\alpha}^{\delta}$ is the  minimizer of the following Tikhonov functional $\|F(q)- \Lambda_q^{\delta}\|_*^p+ \alpha \|q-q_0\|_{H^{\beta}}^p$ and Assumption $1$ holds.\newline  
\textbf{Example 2} (\textbf{Inverse scattering problem}).  The classical inverse scattering problem
is concerned with the recovery of  refractive index of a medium  in the presense of near or far field measurements of scattered time-harmonic acoustic waves \cite{Hohage}. The forward problem can be defined as follows:  Given one or several wave$($s$)$ $u^i$ which solves the Helmholtz equation $\Delta u^i+k^2u^i=0$ and a refractive index $\eta=1-f$, determine the total field$($s$)$ $u=u^i+u^s$ such that \begin{equation}\label{16}
 \Delta u+k^2\eta u=0, \ \text{in}\ \mathbb{R}^3\end{equation} \begin{equation}\label{17}
 \frac{\partial u^s}{\partial z}-\iota ku^s=O\bigg(\frac{1}{z^2}\bigg),\  \text{as}\ |x|=z\to\infty. \end{equation} The condition $(\ref{17})$ is known as the  Sommerfeld radiation condition.  
 The corresponding inverse problem is the reconstruction of the refractive index $\eta=1-f$ from the far field data $u^{\infty}$. Note that every solution of $(\ref{16})$ satisfying $(\ref{17})$ has the following asymptotic behavior \begin{equation*}
 u(x)=u^i(x)+\frac{e^{\iota kz}}{z}\bigg(u^{\infty}(\hat{x})+O\bigg(\frac{1}{z^2}\bigg)\bigg), \ \text{as}\ |x|=z\to\infty,
 \end{equation*}
uniformly for all directions $\hat{x}=\frac{x}{t}$ is in the set $\mathbb{S}=\{x\in \mathbb{R}^3: |x|=1\}$.
Mathematically, the forward operator $F$ is the mapping  $$F:D(F)\subset L^{\infty}(\mathbb{R}^3)\to L^2(\mathbb{S}\times \mathbb{S}): F(f)=u^{\infty}.$$ 
In  \cite[Corollary 2.5]{Hohage}, it has been shown that, under certain conditions,  the inverse problem under consideration fulfills the following conditional stability estimate $$\|f_1-f_2\|_{H^m}\leq K\big(\ln(3+\|F(f_1)-F(f_2)\|^{-2}_{L^2(\mathbb{S}\times \mathbb{S})})\big)^{-\mu\theta},\ \ 0<\theta<1,$$
where $f_1$ and $f_2$ are such that  $$\|f_1-f_2\|_{H^s}\leq C_s,\ \   s\neq 2m+\frac{3}{2}$$ for  $\frac{3}{2}<m<s,$ $K>0, C_s>0$  and $H^m$ is the Sobolev space. Clearly, an estimate  of the form $(\ref{4})$ holds with the index function  $\eth(t)=\big(\ln(3+t^{-2})\big)^{-\mu\theta}$, where  $ 0<\theta<1$ and  $\mu\leq 1$ is some constant. This means the results of Theorem 1 are also applicable on this inverse problem.
\subsection{Comparison Analysis}
In this section, we compare the convergence rates results of  Tikhonov regularization  method obtained in Theorem 1 with other notable works in this direction.  \newline
\subsubsection{Comparison with the results of \cite{Schuster}}
Schuster et al. \cite{Schuster} considered the following Tikhonov regularization method:\begin{equation*}
\frac{1}{p} \|F(u)-v^{\delta}\|^p + \alpha \frac{1}{q}\|u\|_{W}^q, \ u\in D(F)\cap W\subset U,
\end{equation*} where $W$ is a dense and continuously embedded subset of $U$ and $1<p, q<\infty$. The other symbols used above carry the same meaning as in (\ref{2}). Further, the conditional stability estimate considered in \cite{Schuster} can be written as $$\|u-u^{\dagger}\|\leq C\ \eth(\|F(u)-F(u^{\dagger})\|),\ C>0, u, u^{\dagger}\in M_R,$$ where $M_R=\{u\in D(F)\cap W:\|u\|_W\leq R\}$ and $\eth$ is a concave index function. Then, under certain conditions, the following convergence rates have been established
 \begin{equation}\label{18} 
 \|u_{\alpha}^{\delta}- u^{\dagger}\| = O\big(\eth(\delta)\big) \ \ \text{for}\ \alpha=\alpha(\delta)\sim \delta^p,
  \end{equation}
where $u^{\dagger}\in D(F)\cap W$. The convergence rates obtained in Theorem $1$ are \begin{equation} \label{19} 
 \|u_{\alpha}^{\delta}- u^{\dagger}\|_{\eta} = O\big(\eth\big(\delta^{\frac{p-\epsilon}{p}}\big)\big) \ \ \text{for}\ \alpha=\alpha(\delta)\sim \delta^{p-\epsilon}, 
  \end{equation} where   $0 < \epsilon < p$. Note that if $\epsilon\to 0$, then the two convergence rates (\ref{18}) and (\ref{19}) are of the same order. This shows that the convergence rates of Tikhonov regularization in terms of weaker norms are of the same order as in the case of stronger norm. 

\subsubsection{Comparison with the results of \cite{Werner}}
Werner et al. \cite{Werner} considered the following Tikhonov regularization method:\begin{equation*}
S(F(u), v^{\delta}) + \alpha \|u\|_s^2, \ u\in D(F),
\end{equation*} where $U$ and $V$ are real Hilbert spaces, $s>0$ and $S(\cdot, \cdot)$ is a data fidelity term. 
 Further, the conditional stability estimate considered in \cite{Werner}  is  of the form $$\|u-u^{\dagger}\|_{-a}\leq C\ \eth(\|F(u)-F(u^{\dagger})\|),\ C>0, u\in Q,$$ where $a\geq 0$, $0\leq s< x\leq 2s+a, -a<s$ for $s\in \mathbb{R}$,  $Q$ is a subset of a Hilbert scale  generated through a linear self-adjoint strictly positive operator (see \cite{Engl}) and $\eth$ is a concave index function.
  Then, under certain conditions, the following convergence rates have been established
 \begin{equation}\label{20} 
 \|u_{\alpha}^{\delta}- u^{\dagger}\|_s = O\big((\eth(\delta))^{\frac{x-s}{a+x}}\big) \ \ \text{for}\ -\frac1{\alpha}\in \partial\bigg(-\big(\eth(\delta)\big)^{\frac{2(x-s)}{a+x}}\bigg),
  \end{equation}
where $u\in Q$, $u^{\dagger}$ uniquely solves (\ref{1}) and $\partial h(x)$ denotes the subdifferential of $h$ \cite{Scherzer}. Now, if $a=0$, then that means the convergence rates (\ref{20}) holds in a subset of $U$ and they are of the order of $O\big((\eth(\delta))^{\frac{x-s}{x}}\big)$. Note that the choice of $\alpha$ mentioned in (\ref{20}) fulfills $(3)$ of Lemma $1$ with $p=2$. Therefore, as $s\to 0$, these convergence rates are of the order of $O\big(\eth(\delta)\big)$, i.e., they are of the same order as the convergence rates mentioned in (\ref{18}) and (\ref{19}).
\section{Discussion}

We have obtained the convergence rates of  Tikhonov regularization scheme for non-linear ill-posed problems in Banach spaces by incorporating the  novel smoothness concept of conditional stability estimates  in weaker norms.  Moreover, the theory presented in Theorem  $1$ is complemented  with two concrete examples. Also, we have compared our convergence rates results with several other notable works that engaged conditional stability estimates to derive the convergence rates of Tikhonov regularization and  shown that they are of the same order.

\noindent One of the important future questions related to this work is to determine the optimal rates of convergence that can be achieved via conditional stability estimates. In this direction, the notable works of \cite{Egger, Neubauer} can be used as a reference.

\bibliographystyle{plain}

\end{document}